\documentclass[reqno]{amsart}

\usepackage{amsmath}
\usepackage{amsfonts}
\usepackage{amssymb,enumerate}
\usepackage{amsthm}
\usepackage[all]{xy}
\usepackage{rotating}
\usepackage{hyperref}
\usepackage{color}

\theoremstyle{plain}
\newtheorem{lem}{Lemma}[section]
\newtheorem{cor}[lem]{Corollary}
\newtheorem{prop}[lem]{Proposition}
\newtheorem{thm}[lem]{Theorem}

\newtheorem*{mthm*}{Theorem}

\theoremstyle{definition}

\newtheorem{disc}[lem]{Remark}

\newtheorem{convention}[lem]{Convention}

\newtheorem*{convention*}{Convention}

\newcommand{\depth}{\operatorname{depth}}

\newcommand{\ext}{\operatorname{Ext}}

\DeclareMathOperator{\Extindex}{Ext-index}

\newcommand{\ideal}[1]{\mathfrak{#1}}
\newcommand{\m}{\ideal{m}}

\newcommand{\p}{\ideal{p}}

\newcommand{\fm}{\ideal{m}}

\newcommand{\fp}{\ideal{p}}

\renewcommand{\leq}{\leqslant}

\newcommand{\Ext}[4][R]{\operatorname{Ext}_{#1}^{#2}(#3,#4)}

\newcommand{\Tor}[4][R]{\operatorname{Tor}^{#1}_{#2}(#3,#4)}

\def\Tor{\operatorname{Tor}}
\def\Ext{\operatorname{Ext}}

\def\m{\mathfrak{m}}

\def\p{\mathfrak{p}}

\newcommand{\Spec}{\operatorname{Spec}}

\newcommand{\chara}{\operatorname{char}}

\numberwithin{equation}{lem}

\begin{document}

\bibliographystyle{amsplain}

\title[A counterexample to the localization problem for AB rings]{A counterexample to the localization problem for AB rings}

\author[Justin Lyle]{Justin Lyle}
\address{Department of Mathematics and Statistics, 305 W Samford Avenue, Auburn University, AL 36849, U.S.A.}
\email{jll0107@auburn.edu}
\urladdr{https://jlyle42.github.io/justinlyle/}

\author[Saeed Nasseh]{Saeed Nasseh}
\address{Department of Mathematical Sciences\\
Georgia Southern University\\
Statesboro, GA 30460, U.S.A.}
\email{snasseh@georgiasouthern.edu}
\urladdr{https://sites.google.com/site/saeednasseh/home}


\keywords{Gorenstein ring, AB ring, localization, trivial vanishing, embedded deformation, Ext.}
\subjclass[2020]{Primary 13D07; Secondary 13H10.}

\begin{abstract}
We construct a complete local Gorenstein ring $R$ of dimension $1$ with a prime ideal $\p\in \Spec(R)$ such that $R$ is an AB ring, but the localization $R_{\p}$ is not an AB ring. This settles the localization problem for AB rings posed by Huneke and Jorgensen~\cite[Questions 6 (3)]{HJ03} in the negative. 
\end{abstract}

\maketitle


\section{Introduction}\label{sec20260829a}

Let $R$ be a commutative noetherian local ring. Following Huneke and Jorgensen~\cite{HJ03}, we say that $R$ is an \emph{AB ring} if it is Gorenstein and satisfies the \emph{uniform Auslander condition}, that is, the value
$$
\Extindex (R):=\sup \{n \mid \Ext^{i>n}_R(M,N)=0\ \text{and}\ \Ext^n_R(M,N) \ne 0\}
$$
is finite; here, the supremum is taken over all pairs of finitely generated $R$-modules $M$ and $N$ with $\Ext^{i\gg 0}_R(M,N)=0$. 
When $R$ is Gorenstein, by~\cite[Proposition 3.2]{HJ03} we have $\Extindex (R)<\infty$ if and only if $\Extindex (R)=\dim(R)$.
\vspace{2mm}

Complete intersection rings are the prototypical examples of AB rings; see \cite[Corollary 3.5]{HJ03}. In some sense, AB rings abstractly capture much of the homological behavior owed to complete intersections from their support theory \cite{AB00}. Nonetheless, the class of AB rings is larger than that of complete intersections, including, for example, all Gorenstein rings with various small numerics; see, e.g., \cite{AI22, HJ03, LM20, Se03}. Moreover, while every AB ring is Gorenstein, examples constructed by Jorgensen and \c{S}ega~\cite{JS04} provide concrete witnesses that the converse fails to hold. Also, recent works of Kimura, Lyle, Otake, and Takahashi~\cite[Theorem 1.2]{KL23} and Kimura, Lyle, and Soto Levins~\cite[Theorem 1.1]{KL25} show that AB rings are precisely the Gorenstein rings for which the derived variation of Auslander's depth formula holds.
\vspace{2mm}  

A problem that appeared in the original work of Huneke and Jorgensen~\cite[Questions 6 (3)]{HJ03} and remained stubbornly open for more than two decades is the localization problem for AB rings, which simply asks if $R$ is an AB ring, must $R_{\fp}$ also be an AB ring for all $\p \in \Spec(R)$. The challenge in this problem has been the difficulty in checking the uniform Auslander condition. The only classes of rings known to be AB are those with deep structural properties that have a tendency to themselves behave well under localization, e.g., the complete intersection condition. In \cite{NS19}, examples are given that show the uniform Auslander condition itself need not localize, but their construction is fundamentally dependent on fiber products that will only be Gorenstein when they are hypersurfaces, and so can never be used to produce an AB counterexample; see \cite[Remark 3.2]{NS19}.
\vspace{2mm} 

In this work, we make use of a different construction, which is a more direct application of the philosophy of Jorgensen and \c{S}ega's original examples of artinian Gorenstein rings constructed in~\cite{JS04} that fail to be AB. The key point is to build $R$ in such a way that modding out an appropriate linear form gives a ring with embedded deformation, and to take advantage of the fact that such deformations preserve the AB property, allowing us to study $R$ via a ring with substantially smaller numerics.

\section{The counterexample}

In the following convention, we fix our notation and explicitly introduce the ring that will serve as our counterexample to the localization problem for AB rings. Its relevant properties are discussed and established subsequently.

\begin{convention}\label{para20260829a}
Throughout this section, let $k$ be any field and 
$$
S:=k[\![x_1,x_2,x_3,x_4,x_5]\!].
$$ 
Consider the ideal $I$ of $S[\![t]\!]$ generated by the forms
\begin{gather*}
(1+t)x_1x_3+x_2x_3,
\
x_1x_4+x_2x_4,
\
x_3^2+(1+t)x_1x_5-x_2x_5,
\\
x_4^2+x_1x_5-x_2x_5,
\
x_1^2,
\
x_2^2,
\
x_3x_4,
\
x_3x_5,
\
x_4x_5,
\
x_5^2
\end{gather*} 
and let $$R:=S[\![t]\!]/I.$$ Note that $R$ is a complete local ring.
\end{convention}

\begin{prop}\label{exisgor}
$R$ is Gorenstein of dimension $1$ with non-zero-divisor $t$.
\end{prop}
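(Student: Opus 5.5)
The plan is to reduce everything to the artinian ring $A:=R/tR = S/I_0$, where $I_0\subseteq S$ is the ideal obtained by setting $t=0$ in the generators. Standard facts then do the work. If $t$ is a non-zero-divisor on $R$ and $R/tR$ is artinian Gorenstein, then $R$ is Cohen--Macaulay of dimension $1$, and Gorenstein because the Gorenstein property lifts modulo a regular element. So there are two tasks: show $A$ is artinian Gorenstein, and show $t$ is $R$-regular.

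\emph{Step 1: $A$ is artinian Gorenstein.} The ideal $I_0$ is generated by ten quadrics:
$$x_1x_3+x_2x_3,\ x_1x_4+x_2x_4,\ x_3^2+x_1x_5-x_2x_5,\ x_4^2+x_1x_5-x_2x_5,\ x_1^2,\ x_2^2,\ x_3x_4,\ x_3x_5,\ x_4x_5,\ x_5^2.$$
Since $\dim_k S_2=15$, we get $\dim_k A_2=5$. I will compute the Hilbert function and expect it to be $(1,5,5,1)$, with $A_3$ spanned by $x_1x_2x_5$ (equivalently by $x_3^3$, $x_4^3$, etc.) and $A_4=0$. This is a direct monomial reduction. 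Then I will verify that the socle of $A$ is one-dimensional, i.e.\ equal to $A_3$. Equivalently, the multiplication pairing $A_1\times A_2\to A_3\cong k$ is perfect, so that $A_2\times A_2$ type degeneracies cannot occur and $\dim_k(0:_A\mathfrak m)=1$.

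An alternative route is to exhibit a Macaulay inverse system. One would find a cubic $F$ (for instance of the shape $X_1X_2X_5+\text{terms in }X_3^2X_5, X_4^2X_5,\dots$) with $\operatorname{Ann}(F)=I_0$. Dimension counting then gives equality once $I_0\subseteq \operatorname{Ann}(F)$ and the Hilbert functions agree. I would try the socle computation first, since it is finite linear algebra over $k$ and works in any characteristic.

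\emph{Step 2: $t$ is a non-zero-divisor on $R$.} The ideal $I$ is generated by forms in $x$ with coefficients in $k[\![t]\!]$, and we have a family over $k[\![t]\!]$ whose special fiber is $A$. It suffices to show that $R$ is a flat (hence free) $k[\![t]\!]$-module of rank $\dim_k A=12$; freeness over $k[\![t]\!]$ gives that $t$ is regular. For this I will show that the same twelve monomials spanning $A$ ($1$, $x_1,\dots,x_5$, five quadrics, one cubic) generate $R$ as a $k[\![t]\!]$-module. That follows by Nakayama from $R/tR=A$ together with $R$ being finite over $k[\![t]\!]$, which holds because $A$ is artinian and $R$ is complete. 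Then I need the rank of the generic fiber $R\otimes k(\!(t)\!)$ to be $12$ too. Over $k(\!(t)\!)$, the substitution $x_1\mapsto (1+t)^{-1}\cdot$ (suitably rescaled) should transform the relations into an ideal of the same shape as $I_0$, with units $1+t$ in place of $1$. Redoing the Step 1 count with generic coefficients gives the Hilbert function $(1,5,5,1)$ again, so the generic fiber has length $12$. A finitely generated $k[\![t]\!]$-module with $12$ generators and generic rank $12$ is free, hence $t$ is regular and $\dim R=1$.

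The main obstacle is Step 2, and specifically ensuring that the relations do not collapse dimension in degree $3$ for generic $t$. The cubic relations obtained by multiplying the quadrics by linear forms must still leave exactly a one-dimensional degree-$3$ part. My plan there is to compute $A_3$ explicitly with the parameters $1+t$ kept symbolic, checking that the socle generator $x_1x_2x_5$ does not become zero (that is, that the coefficient $(1+t)-(-1)$-type combinations stay units). If that check turns out messy, I would fall back to a direct Gröbner basis computation of $I$ over $k[\![t]\!]$ and confirm that $t$ does not divide any leading-term syzygy.
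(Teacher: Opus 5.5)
Your proposal is correct and follows the paper's argument: $R$ is finite over $V=k[\![t]\!]$ with $\mu_V(R)=\dim_k R/tR=12$, and the generic fiber $R\otimes_V k(\!(t)\!)$ is the same algebra with coefficient $1+t$ (so no substitution is needed) and also has length $12$; hence $R\cong V^{12}$, $t$ is regular, and Gorensteinness lifts from $R/tR$, with the paper citing \cite[Section 2]{JS04} for the length and Gorenstein facts you plan to verify by hand. One slip to fix when you do so: $x_3^3=x_4^3=0$ in $A$ (because $x_3^2,x_4^2\in(x_1x_5,x_2x_5)$ and $x_3x_5=x_4x_5=0$), so take $x_1x_3^2=x_1x_2x_5$ as socle representative, and the correct dual generator is $X_1X_2X_5+X_1X_3^{[2]}-\alpha X_2X_3^{[2]}+X_1X_4^{[2]}-X_2X_4^{[2]}$ (no $X_3^2X_5$ term), whose annihilator contains the relations for every $\alpha$, which settles both fibers at once.
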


\begin{proof}
Let $V=k[\![t]\!]$ and $K$ be the field of fractions of $V$. Moreover, we view $R$ as a $V$-algebra. Note that $R/tR\cong U$, where
\begin{equation}\label{eq20260830a}
U=\frac{S}{\begin{pmatrix} x_1x_3+x_2x_3, x_1x_4+x_2x_4, x_3^2+x_1x_5-x_2x_5,\\ x_4^2+x_1x_5-x_2x_5, x_1^2, x_2^2, x_3x_4, x_3x_5, x_4x_5, x_5^2\end{pmatrix}}
\end{equation}
The right-hand side of~\eqref{eq20260830a} is the ring $A_{\alpha}$ of~\cite[Section 2]{JS04} with $\alpha=1$. In particular, the $k$-vector space dimension of $R/tR$ is $12$. It follows that as a $V$-module, $R$ is finitely generated and thus, $\dim(R) \leq 1$ with $R/tR$ artinian. Since $V$ is a DVR, we may write $R \cong V^{\oplus r} \oplus T$ for some non-negative integer $r$ and a torsion $V$-module $T$. Therefore, we have the equality
\begin{equation}\label{eq20260829a}
r+\mu_V(T)=12
\end{equation}
where $\mu_V(T)$ denotes the minimal number of generators of the $V$-module $T$. On the other hand, $R \otimes_V K$ is the ring $A_{\alpha}$ of~\cite[Section 2]{JS04} with base field $K$ and $\alpha=1+t$. In particular, by~\cite[Section 2]{JS04}, the $K$-vector space dimension of $R \otimes_V K$ is $12$ and therefore, $r=12$. It then follows from~\eqref{eq20260829a} that $T=0$. Thus, $\depth(R)>0$, which forces $R$ to be a Cohen-Macaulay local ring of dimension $1$. From above, $t$ is a parameter for $R$ and it follows that $t$ is a non-zero-divisor in $R$. By \cite[Section 2]{JS04} we know that $R/tR$ is Gorenstein, and hence, $R$ is Gorenstein, as desired. 
\end{proof}

\begin{thm}\label{maintheorem1}
$R$ is an AB ring.
\end{thm}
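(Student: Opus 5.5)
The plan is to reduce the AB property of $R$ to that of an artinian Gorenstein ring with much smaller numerics. I will use two transfer principles, which I expect to be available in the literature (e.g.\ \cite{HJ03}, or standard change-of-rings arguments for Ext via the Eisenbud operators).

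\begin{enumerate}
\item If $x$ is a non-zero-divisor on a Gorenstein local ring $B$ and $B/xB$ is AB, then $B$ is AB.
\item If $A\cong Q/(f)$ is an embedded deformation, with $Q$ Gorenstein and $f\in\m_Q^2$ a $Q$-regular element, then $A$ is AB if and only if $Q$ is AB.
\end{enumerate}

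For (2), the key input is that for $A$-modules $M,N$ one has a long exact sequence relating $\Ext_Q(M,N)$ and $\Ext_A(M,N)$. In it, $\Ext_A(M,N)$ is a finitely generated graded module over $A[\chi]$, where $\chi$ has degree $2$. Eventual vanishing over $A$ then forces eventual vanishing over $Q$ with a controlled index, and conversely.

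\textbf{Step 1.} By Proposition~\ref{exisgor}, $t$ is a non-zero-divisor on $R$ and $R/tR\cong U=A_1$. So by (1) it suffices to show that $U$ is AB.

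It would be natural to quote \cite{JS04} here, since $\alpha=1$ is a root of unity and Jorgensen and \c{S}ega's failure of the uniform Auslander condition occurs only for $\alpha$ of infinite multiplicative order. However, I would not rely on that and would instead prove it directly, as follows.

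\textbf{Step 2.} With $\alpha=1$, the first two relations factor as $(x_1+x_2)x_3$ and $(x_1+x_2)x_4$. I will change coordinates to $u=x_1+x_2$, $v=x_1-x_2$, assuming $\chara k\neq 2$ first; characteristic $2$ would need a separate choice. The aim is to present $U$ as $Q/(f)$, where:
\begin{itemize}
\item $Q=P/J$ with $P$ a power series ring;
\item $f$ is a quadric, built from $x_1^2$, $x_2^2$ or $x_1x_5-x_2x_5$;
\item $f$ is regular on $Q$.
\end{itemize}
This is the ``embedded deformation'' promised in the introduction. One must then check that $Q$ is one-dimensional Cohen--Macaulay, which gives Gorenstein because $Q/(f)\cong U$ is Gorenstein. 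This can be done by the same flatness-over-a-DVR length count used in the proof of Proposition~\ref{exisgor}.

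\textbf{Step 3.} Next, I will locate a linear form $\ell$, for instance $\ell=u$ or $\ell=x_5$ in the new coordinates, that is $Q$-regular. The goal is that $Q/\ell Q$ is an artinian Gorenstein ring with small Hilbert function, ideally with cube of the maximal ideal zero, or of codimension at most $4$. For such rings the AB property is known from \cite{HJ03, Se03, AI22, LM20}. By (1), $Q$ is then AB; by (2), $U$ is AB; and by Step 1, $R$ is AB.

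\textbf{Main obstacle.} I expect the real difficulty to be Step 2, together with the choice of $\ell$ in Step 3. One has to find an explicit regular quadric whose cofactor ring $Q$ is Gorenstein and which admits a regular linear form landing in a class already known to be AB. Verifying regularity will require a Gr\"obner-basis or length computation analogous to the one for $A_\alpha$ in \cite{JS04}.

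If that route stalls, the fallback is to work directly over $U$. Since $U$ has Hilbert function $(1,5,5,1)$, one can try to imitate the \c{S}ega-type argument. That argument bounds the index at which vanishing of $\Ext_U(M,N)$ begins by analyzing the Koszul-like structure of the defining relations, and it should succeed for $\alpha=1$ precisely because the relevant cyclic modules $U/(x_1-\alpha x_2)$-type have periodic rather than aperiodic resolutions.
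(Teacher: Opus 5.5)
Your outline has the same architecture as the paper's proof. You pass to $U=R/tR$, write $U\cong Q/(f)$ with $f$ a $Q$-regular quadric, cut $Q$ by a regular linear form $\ell$ down to a small artinian Gorenstein ring known to be AB, and lift back. Your principles (1) and (2) are both instances of \cite[Proposition 3.3(1)]{HJ03}. The direction of (2) you actually need, that $Q$ AB implies $Q/(f)$ AB, holds for any $Q$-regular $f$, and $f\in\m_Q^2$ plays no role.

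\textbf{The gap.} The proposal stops exactly where the proof starts. You never specify $J$, $f$, or $\ell$, and you yourself flag their construction as the main obstacle. That construction, together with the regularity check, is essentially the whole content of the theorem, so what you have is a plan rather than a proof. Moreover, the linear forms you tentatively suggest fail for the deformation that actually works. There $(x_1+x_2)x_3$ and $x_3x_5$ lie in $J$ while $x_3\neq 0$ in $Q$, so both $x_1+x_2$ and $x_5$ are zero-divisors on $Q$. The passage to the coordinates $u,v$ is also unnecessary, and it is what forces your separate treatment of characteristic $2$.

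\textbf{The missing idea.} The paper uses a characteristic-free choice.
\begin{itemize}
\item Let $J$ be the defining ideal of $U$ with the two generators $x_1^2,x_2^2$ replaced by the single generator $x_1^2-x_2^2$, and take $f=x_1^2$ and $\ell=x_1$. Then $U\cong Q/(x_1^2)$.
\item The ring $A'=Q/x_1Q$ has $k$-basis $1,x_2,x_3,x_4,x_5,x_2x_5$. A direct computation shows that $Q\otimes_V K$ also has dimension $6$ over $K$, where $V=k[\![x_1]\!]$ and $K$ is its fraction field.
\item Hence $Q$ is free of rank $6$ over $V$. This makes $x_1$, and therefore $f=x_1^2$, regular on $Q$ in a single length count. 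This is why it pays to take $f$ to be the square of the linear form you cut by.
\item Thus $Q$ is one-dimensional Cohen--Macaulay, and Gorenstein because $U$ is. Then $A'$ is Gorenstein with Hilbert function $(1,4,1)$, so it is Tor-friendly by \cite[Corollary 1.8]{Se03}.
\end{itemize}
The paper then lifts Tor-friendliness to $Q$ via \cite{AI22,LM20} to conclude that $Q$ is AB. Alternatively, since Tor-friendly rings satisfy the uniform Auslander condition, $A'$ is AB, and your principle (1) lifts this to $Q$ just as well. Your fallback of running a \c{S}ega-type argument directly over $U$ is too vague to assess.
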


\begin{proof}
By Proposition~\ref{exisgor}, $R$ is Gorenstein and the element $t\in R$ is a non-zero-divisor. As we mentioned in the proof of Proposition~\ref{exisgor}, we have the isomorphism $R/tR\cong U$, where $U$ is the $k$-algebra introduced in~\eqref{eq20260830a}. Let $Q=S/J$, where $J$ is the ideal generated by the quadratic forms
\begin{gather*}
x_1x_3+x_2x_3,
\
x_1x_4+x_2x_4,
\
x_3^2+x_1x_5-x_2x_5,
\\
x_4^2+x_1x_5-x_2x_5,
\
x_1^2-x_2^2,
\
x_3x_4,
\
x_3x_5,
\
x_4x_5,
\
x_5^2.
\end{gather*}
It is clear that $U \cong Q/(x_1^2)$.
\vspace{2mm}

\noindent \textbf{Claim}: $x_1^2$ is a non-zero-divisor in $Q$. 
\vspace{2mm}

\noindent To prove this claim, we show that $\depth Q>0$. View $Q$ as an algebra over $V=k[\![x_1]\!]$ and let $K$ be the field of fractions of $V$. Setting $A'=Q/(x_1)$, we have 
$$
A' \cong \frac{k[\![x_2,x_3,x_4,x_5]\!]}
{(x_2x_3,x_2x_4,x_3^2-x_2x_5,x_4^2-x_2x_5, x_2^2,x_3x_4,x_3x_5,x_4x_5,x_5^2)}.
$$
We directly compute that, as a $k$-vector space, $A'$ has the basis 
$$
1,x_2,x_3,x_4,x_5,x_2x_5.
$$
In particular, $\m_{A'}^3=0$ and the $k$-vector space dimension of $A'$ is $6$; here, $\fm_{A'}$ denotes the maximal ideal of $A'$. As $V$ is a DVR, we may write $Q \cong V^{\oplus r} \oplus T$ for some non-negative integer $r$ and torsion $V$-module $T$ for which $r+\mu_V(T)=6$. Taking $y_i=x_i/x_1$, we see that 
\begin{equation}\label{eq20260829b}
Q \otimes_V K \cong \frac{K[\![y_2,y_3,y_4,y_5]\!]}{\begin{pmatrix}(1+y_2)y_3,(1+y_2)y_4,y_3^2+y_5(1-y_2),\\ y_4^2+(1-y_2)y_5,1-y_2^2, y_3y_4,y_3y_5,y_4y_5,y_5^2\end{pmatrix}}.
\end{equation}
We calculate directly that, as a $K$-vector space, the algebra appearing on the right-hand side of~\eqref{eq20260829b} has the basis
$$
1,y_2,y_3,y_4,y_5,y_2y_5.
$$
In particular, as $Q \otimes_V K \cong K^{\oplus r}$, we have $r=6$. This forces $T=0$, and thus, $\depth Q>0$. Since $A'$ is artinian, it follows that $x_1$ and $x_1^2$ are parameters for $Q$, and thus $Q$ is a Cohen-Macaulay local ring of dimension $1$ with $x_1$ and $x_1^2$ non-zero-divisors in $Q$. This concludes the proof of the claim.

Since the Gorenstein condition is preserved under modding out non-zero-divisors, we have that $Q$ and $A'$ are Gorenstein. As $A'$ has embedding codimension $4$, satisfies the equality $\m_{A'}^3=0$, and has the $k$-vector space dimension $6$, it follows from \cite[Corollary 1.8]{Se03} that $A'$ is Tor-friendly in the sense of~\cite{AI22}\footnote{The $\Tor$-friendly condition is same as the trivial vanishing condition considered in \cite{JS04} and \cite{LM20}.}. Then, by \cite[Proposition 2.3]{AI22} or \cite[Proposition 3.6]{LM20} the ring $Q$ is also Tor-friendly. Therefore, $Q$ satisfies the uniform Auslander condition; see, e.g., \cite[1.2]{JS04}. This means that $Q$ is an AB ring. It then follows from \cite[Proposition 3.3 (1)]{HJ03} that $R/tR\cong U\cong Q/(x_1^2)$ is also an AB ring. Another application of \cite[Proposition 3.3 (1)]{HJ03} implies that $R$ is also an AB ring, as desired.  
\end{proof}

\begin{thm}\label{maintheorem}
Consider the prime ideal $\p=(x_1,x_2,x_3,x_4,x_5)$ of $R$. The localization $R_{\p}$ is not an AB ring.
\end{thm}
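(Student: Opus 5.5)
Since $t\notin\p$ and $R$ is a finitely generated free module over $V=k[\![t]\!]$ (Proposition~\ref{exisgor}), every element of $V\setminus\{0\}$ avoids $\p$. Indeed, $\p\cap V=0$ because $R/\p\cong k[\![t]\!]$. Hence $R_{\p}$ is a localization of $R\otimes_V K$, where $K=k(\!(t)\!)$. As noted in the proof of Proposition~\ref{exisgor}, $R\otimes_V K$ is the artinian ring $A_{\alpha}$ of~\cite[Section 2]{JS04} over $K$ with $\alpha=1+t$. Since $\dim R=1$ and $\p$ is a minimal prime, $R_{\p}$ is artinian. The first step is therefore to identify $R_{\p}$ with $A_{1+t}$ over $K$. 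Concretely, the ideal $(x_1,\dots,x_5)$ in $R\otimes_V K$ is the unique maximal ideal: $A_\alpha$ is local with maximal ideal generated by the $x_i$, being a quotient of $K[\![x]\!]$, or $K[x]$ localized, by a homogeneous ideal with finite length. So $R_\p\cong A_{1+t}$ as $K$-algebras, and I would verify this by checking that the $12$ elements of a $K$-basis are already supported at $\p$.

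The second step is to invoke the main result of Jorgensen and \c{S}ega~\cite{JS04}. There, $A_\alpha$ is shown to fail the uniform Auslander condition whenever $\alpha$ is not a root of unity; more precisely, they produce modules $M,N$ with $\Ext^{i}(M,N)$ nonzero for $i$ up to arbitrarily large finite bounds and vanishing afterwards. Their proof is, as I recall, field-independent: it uses only that $\alpha\neq 0$ and that $\alpha$ has infinite multiplicative order. So the remaining check is that $\alpha=1+t\in K=k(\!(t)\!)$ is not a root of unity. This holds because $(1+t)^n=1$ would force $nt+\binom n2t^2+\cdots=0$ in $k[\![t]\!]$. In characteristic $p$, take $n=p^e m$ with $p\nmid m$; then $(1+t)^n=(1+t^{p^e})^m$, whose $t^{p^e}$ coefficient is $m\neq 0$. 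So $1+t$ has infinite order in every characteristic.

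Given this, $R_\p$ is Gorenstein and artinian, and by~\cite{JS04} its $\Extindex$ is infinite, so $R_\p$ is not AB.

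The main obstacle I expect is confirming that the hypotheses in~\cite{JS04} are exactly "$\alpha$ not a root of unity" over an arbitrary field, rather than, say, an algebraically closed or characteristic-zero field. If their statement is more restrictive, I would pass to the algebraic closure $\overline K$. The uniform Auslander condition descends along the faithfully flat extension $A\to A\otimes_K\overline K$, since Ext commutes with this flat base change and their witnessing modules are cyclic with defining ideals involving $\alpha$ only, so they are already defined over $K$. Alternatively, I would redo their module construction directly over $K$, using the cyclic modules $A/(x_1-\alpha^{j}x_2)$-type witnesses.
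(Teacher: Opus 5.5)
Your proposal is correct and follows the paper's proof. You identify $R_{\p}$ with the Jorgensen--\c{S}ega ring $A_{1+t}$ over $k(\!(t)\!)$, show that $1+t$ has infinite multiplicative order by the same $n=p^qm$ argument, and apply \cite[Remark 3.4]{JS04}, and your check that $\p\cap V=0$ and $R_{\p}=R\otimes_V K$ fills in a step the paper leaves implicit; your algebraic-closure fallback is unnecessary, since the paper applies \cite[Remark 3.4]{JS04} directly over $k(\!(t)\!)$ with $k$ an arbitrary field.
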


\begin{proof}
Note that $R_{\p}$ is the ring $A_{\alpha}$ of \cite[Section 2]{JS04} with base field $k(\!(t)\!)$ and $\alpha=1+t$. Note also that $\alpha$ has infinite order in $k(\!(t)\!)$. Indeed, suppose $(1+t)^n=1$ for some positive integer $n$. Thus, $\chara k\neq 0$. On the other hand, if $\chara k=p>0$, then write $n=p^qm$, where $m$ is not divisible by $p$. We then have $1=(1+t)^n=(1+t^{p^q})^m$. However, the coefficient of $t^{p^q}$ on the right-hand side is $m$, which cannot be. Hence, $\alpha=1+t$ has infinite order in $k(\!(t)\!)$, and therefore, by~\cite[Remark 3.4]{JS04} the ring $R_{\p}$ is not AB.
\end{proof}

\begin{disc}
In Theorem~\ref{maintheorem1}, a key point of the proof that $R/tR$ is an AB ring lies in the fact that $R/tR$ has an embedded deformation in the sense of Avramov \cite{Av89}. It follows from \cite[Corollary 4.4]{Se03} that if $R$ is an artinian ring that is not a hypersurface, then embedded deformations always preclude Tor-friendliness. In particular, from \cite[Proposition 3.6]{LM20}, we have that the ring $R$, even though it is AB, is not Tor-friendly.
\end{disc}

\begin{disc}\label{char2}
When $k$ has characteristic $2$, the embedded deformation constructed in Theorem \ref{maintheorem} takes on a particularly nice form. Indeed, supposing $\chara k=2$, take the ring $U$ from the proof of Theorem \ref{maintheorem} and perform the linear change of variables
$$
x_1 \mapsto b, \ x_2 \mapsto a+b, \ x_3 \mapsto c, \ x_4 \mapsto d, \ x_5 \mapsto e
$$
to see that 
$$
U \cong \frac{k[\![a,b,c,d,e]\!]}{(a^2,b^2,ac,ad,c^2+ae,d^2+ae,cd,ce,de,e^2)} \cong B \otimes_{k} k[\![b]\!]/(b^2)
$$
where $B=k[\![a,c,d,e]\!]/(a^2,ac,ad,c^2+ae,d^2+ae,cd,ce,de,e^2)$.
It is straightforward to see that the $A'$ of Theorem \ref{maintheorem} is isomorphic to $B$ in this situation, and that $Q$ is isomorphic to $U[\![b]\!]$.
\end{disc}

\begin{disc}
As we previously mentioned, the ring $U$ from the proof of Theorem~\ref{maintheorem1} is the ring $A_{\alpha}$ of \cite[Section 2]{JS04} with $\alpha=1$. Given that $U$ is an AB ring while \cite[Remark 3.4]{JS04} shows that $A_{\alpha}$ is not AB for any $\alpha$ of infinite order, it is natural to wonder what fails in the proof for more general $\alpha$. If we consider the appropriate analogue $Q_{\alpha}$ of the ring $Q$ from the proof of Theorem~\ref{maintheorem1}, which would be given as the quotient of $S$ by the ideal generated by the forms  
\begin{gather*}
\alpha x_1x_3+x_2x_3,
\
x_1x_4+x_2x_4,
\
x_3^2+\alpha x_1x_5-x_2x_5,
\\
x_4^2+x_1x_5-x_2x_5,
\
x_1^2-x_2^2,
\
x_3x_4,
\
x_3x_5,
\
x_4x_5,
\
x_5^2
\end{gather*}
it remains true that $A_{\alpha} \cong Q_{\alpha}/(x_1^2)$. However, the equation 
\begin{equation}\label{zdeq}
(\alpha^2-1)x_1^2x_3=\alpha^2 x_1^2x_3-x^2_2x_3=-\alpha x_1x_2x_3-x_2^2x_3=x_2^2x_3-x_2^2x_3=0
\end{equation}
holds in $Q_{\alpha}$. As $\alpha^2-1 \in k$, we have that $(\alpha^2-1)x_3=0$ if and only if $\alpha^2-1=0$, and then, \eqref{zdeq} shows that $x_1$ and $x_1^2$ will be zero-divisors unless $\alpha=1$ or $\alpha=-1$. In fact, it follows as in the proof of Theorem~\ref{maintheorem1} that $A_{\alpha}$ cannot have an embedded deformation when $\alpha$ has infinite order in $k$.
\end{disc}

We conclude this paper with the following result that provides a negative answer to~\cite[Conjecture (P)]{NY}. The proof uses the same argument as in~\cite[Theorem 2.6 (1)]{NY} and is included for completeness.

\begin{cor}
The polynomial ring $R[z]$ does not satisfy the uniform Auslander condition, that is, $\Extindex(R[z])$ is not finite.
\end{cor}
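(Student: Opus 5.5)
The plan is to use the prime $\p=(x_1,\dots,x_5)$ of Theorem~\ref{maintheorem}, at which $R_{\p}$ is not AB, and show that a non-AB localization of $R$ at a non-maximal prime forces $\Extindex(R[z])=\infty$. The mechanism is the standard one: a polynomial ring has many maximal ideals lying over a fixed prime, and localizing at them gives rings of arbitrarily large dimension that carry the bad behavior of $R_{\p}$.

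\textbf{Step 1: the local witnesses.} By Theorem~\ref{maintheorem}, $R_{\p}$ is Gorenstein but not AB. By \cite[Proposition 3.2]{HJ03} this means $\Extindex(R_{\p})=\infty$. So for every $n$ there are finitely generated $R_{\p}$-modules $M_n,N_n$ with $\Ext^{i\gg0}_{R_{\p}}(M_n,N_n)=0$ but with $\Ext^{j}_{R_{\p}}(M_n,N_n)\neq0$ for some $j\ge n$. These can be chosen as localizations of finitely generated $R$-modules, since every finitely generated $R_\p$-module is such a localization.

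\textbf{Step 2: passage to $R[z]$.} Since $\dim R/\p=1$ and $R$ is a complete local domain modulo $\p$, the element $t$ is nonzero in $R/\p$. I would consider the maximal ideal $\ideal{n}=(\p, tz-1)$ of $R[z]$, or a similar ideal. Then $R[z]/\ideal{n}\cong (R/\p)[1/t]$, which is the fraction field $k(\!(t)\!)$, and $R[z]_{\ideal{n}}$ is a localization of $R_{\p}[z]$ at a maximal ideal lying over $\p R_\p$. Consequently $R_{\p}\to R[z]_{\ideal{n}}$ is flat local with regular closed fiber, namely a localization of $k(\!(t)\!)[z]$ at $(tz-1)$, which is a DVR.

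\textbf{Step 3: transporting the witnesses.} Set $M'=M_n\otimes_{R_\p}R[z]_{\ideal n}$ and $N'=N_n\otimes_{R_\p}R[z]_{\ideal n}$. Flat base change gives
$$\Ext^i_{R[z]_{\ideal n}}(M',N')\cong\Ext^i_{R_\p}(M_n,N_n)\otimes_{R_\p}R[z]_{\ideal n},$$
and faithful flatness preserves both the vanishing and the nonvanishing. Hence $\Extindex(R[z]_{\ideal n})=\infty$.

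Each $M_n$ and $N_n$ is the localization of a finitely generated $R[z]$-module, for instance $M\otimes_R R[z]/(tz-1)$, and Ext commutes with localization. So we obtain finitely generated $R[z]$-modules whose Ext vanishes eventually at $\ideal n$, but we must also control the other maximal ideals. To do this, I would replace $M$ by the $R[z]$-module $M\otimes_R R[z]/(tz-1)\cong M_t$, which is finitely generated over $R[z]$ and supported only over primes not containing $t$. The only such prime of $R$ containing $\p$ is $\p$ itself, since $R$ is one-dimensional. Replacing $M$ by a module supported on $V(\p)$, such as a quotient of $M/\p^mM$ or a suitable lift, makes the Ext modules supported only at $\ideal n$. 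Then $\Ext^{i\gg0}_{R[z]}$ vanishes globally while $\Ext^{j}$ with $j\ge n$ does not.

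\textbf{Main obstacle.} The hard step is this last support-control argument: ensuring that global eventual vanishing over $R[z]$ holds and that the nonvanishing index really grows with $n$. I would follow \cite[Theorem 2.6 (1)]{NY} closely here. If needed, I would use the alternative route through maximal ideals $(\p,f(z))$ with $f$ ranging over irreducible polynomials, localizing as above.
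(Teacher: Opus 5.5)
Steps 1--3 correctly show $\Extindex(R[z]_{\mathfrak n})=\infty$ for $\mathfrak n=\p R[z]+(tz-1)$. But that is a statement about a local ring, and the corollary concerns the global ring $R[z]$. The local-to-global passage you postpone is where the whole difficulty lies, and your sketch of it does not work.

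The flat base changes $M'=M_n\otimes_{R_\p}R[z]_{\mathfrak n}$ have dimension one, because the closed fiber is a DVR. So they are not localizations of $R[z]$-modules supported only at $\mathfrak n$. The natural lifts $M[z],N[z]$ cannot work either. Since $\Ext_{R[z]}(M[z],N[z])\cong\Ext_R(M,N)\otimes_R R[z]$, eventual vanishing would force $\Ext^{i}_R(M,N)=0$ for $i>1$, because $R$ is AB of dimension one. That gives $\Ext^i_{R_\p}(M_\p,N_\p)=0$ for $i>1$, which destroys the witness.

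Your repair $M_t=M\otimes_R R[z]/(tz-1)$ is the right kind of module. However, its localization at $\mathfrak n$ is $M'/(tz-1)M'\cong M_\p$, regarded over $R[z]_{\mathfrak n}$ through the surjection $R[z]_{\mathfrak n}\to R_\p$; it is not $M'$. So Step 3 says nothing about $\Ext_{R[z]}(M_t,N_t)$, and your claim that these Ext modules vanish eventually at $\mathfrak n$ does not follow from anything you proved. The same objection applies to your alternative via other maximal ideals $(\p,f(z))$.

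Your further replacement by a module supported on $V(\p)$ is vacuous. By the proof of Proposition~\ref{exisgor}, $R_t$ is the local artinian ring $A_{1+t}$ over $k(\!(t)\!)$. Hence $\p$ is the unique minimal prime of $R$, $R_t=R_\p$, and $M_t$ is already supported only at $\mathfrak n$.

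The missing ingredient is a change of rings along the non-zero-divisor $x=tz-1$ of $A=R[z]$, not a flat base change. For $A/xA$-modules $X,Y$ there is the exact sequence
\[
\cdots\to\Ext^{i-1}_{A/xA}(X,Y)\to\Ext^{i+1}_{A/xA}(X,Y)\to\Ext^{i+1}_{A}(X,Y)\to\Ext^{i}_{A/xA}(X,Y)\to\Ext^{i+2}_{A/xA}(X,Y)\to\cdots .
\]
Suppose $\Ext^i_{A/xA}(X,Y)=0$ for $i>j$ and $\Ext^j_{A/xA}(X,Y)\neq0$. Then $\Ext^i_A(X,Y)=0$ for $i>j+1$, and $\Ext^{j+1}_A(X,Y)\to\Ext^{j}_{A/xA}(X,Y)$ is surjective, so $\Ext^{j+1}_A(X,Y)\neq0$. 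Since $A/xA\cong R_t=R_\p$ has infinite Ext-index by Theorem~\ref{maintheorem} and \cite[Proposition 3.2]{HJ03}, so does $R[z]$.

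This is exactly the paper's proof. It invokes \cite[Lemma 2.1(3)]{NY} for this transfer, and \cite[Corollary 2.3]{NY} to pass from the artinian ring $R_f$ to its localization $R_\p$; for $f=t$ that step is an equality. With this lemma your Steps 2--3 are unnecessary.
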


\begin{proof}
Consider the prime ideal $\p=(x_1,x_2,x_3,x_4,x_5)$ of $R$ and assume that the element $f\in (x_1,x_2,x_3,x_4,x_5,t)\backslash \fp$ is a non-zero-divisor. Note that $R_{\fp}\cong (R_f)_{\fp R_{f}}$. Since $R_f$ is artinian and $R_{\fp}$ is not AB by Theorem~\ref{maintheorem}, it follows from~\cite[Corollary 2.3]{NY} that the ring $R_f$ is not AB either. Since $fz-1$ is a non-zero-divisor in $R[z]$, it follows from the isomorphism $R_f\cong R[z]/(fz-1)$ and~\cite[Lemma 2.1(3)]{NY} that $R[z]$ does not satisfy the uniform Auslander condition.
\end{proof}



\section*{Acknowledgments}
We would like to thank Michael Brown, Srikanth Iyengar, Ryo Takahashi, and Yuji Yoshino for their helpful discussions and comments on the manuscript. Part of our mathematical work was carried out during the second author's visit to Auburn University in December 2025. He is grateful to the Department of Mathematics at Auburn University for its support in making this short visit possible.

\section*{AI use disclosure}
The example $R$ came out of a conversation between the first author and Claude Opus 5. The author had suggested a candidate counterexample built collaboratively by both authors that is much larger and which has numerical properties that are significantly harder to parse. Whether this ring itself gives a counterexample remains open. The model suggested a simplification of this idea, initially working only characteristic $2$, which resulted in the much smaller ring $R$, and suggested the specific change of variables which is noted in Remark \ref{char2}. The paper itself was written wholly by the authors.


\begin{thebibliography}{10}

\bibitem{Av89}
L.~L. Avramov,
\textit{Homological asymptotics of modules over local rings},
Commutative Algebra (Berkeley, CA, 1987),
Math. Sci. Res. Inst. Publ., \textbf{15}, Springer, New York, 1989; 33--62.

\bibitem{AB00}
L. L. Avramov and R.-O. Buchweitz, \emph{Support varieties and cohomology over
  complete intersections}, Invent. Math., \textbf{142} (2000), no.~2, 285--318.

\bibitem{AI22}
L.~L. Avramov, S.~B. Iyengar, S.~Nasseh, and K.~Sather-Wagstaff,
  \emph{Persistence of homology over commutative noetherian rings}, J. Algebra, {\bf 610} (2022), 463--490.
  
\bibitem{HJ03}
C. Huneke and D. A. Jorgensen, \emph{Symmetry in the vanishing of {E}xt over
  {G}orenstein rings}, Math. Scand., \textbf{93} (2003), no.~2, 161--184.
  
\bibitem{JS04}
D. A. Jorgensen and L. M. {\c{S}}ega, \emph{Nonvanishing cohomology and classes
  of {G}orenstein rings}, Adv. Math., \textbf{188} (2004), no.~2, 470--490.
  
\bibitem{KL23}
K. Kimura, J. Lyle, Y. Otake, and R. Takahashi, \emph{On the vanishing of Ext modules over a local unique factorization domain with an isolated singularity}, preprint 2023, \texttt{arXiv:2310.16599}.

\bibitem{KL25}
K. Kimura, J. Lyle, and A. J. Soto-Levins, \emph{On the depth of tensor products over Cohen-Macaulay rings}, preprint 2025, \texttt{arXiv:2505.00441}.

\bibitem{LM20}
J. Lyle and J. Monta\~no,
\textit{Extremal growth of Betti numbers and trivial vanishing of (co)homology}, 
 Trans. Amer. Math. Soc., \textbf{373} (2020),  7937--7958.
 
\bibitem{NS19}
S. Nasseh, S. Sather-Wagstaff, R. Takahashi, and K. VandeBogert,
\emph{Applications and homological properties of local rings with
decomposable maximal ideals}, J. Pure Appl. Algebra, \textbf{223} (2019), no.~3, 1272--1287.

\bibitem{NY}
S.~Nasseh, Y.~Yoshino,
\textit{On $\ext$-indices of ring extensions},
J. Pure Appl. Algebra, \textbf{213} (2009), 1216--1223.
 
\bibitem{Se03}
L. \c{S}ega, {\it Vanishing of cohomology over Gorenstein rings of small codimension}, Proc. Amer. Math. Soc., {\bf 131} (2003), no.~8, 2313--2323.

\end{thebibliography}
\providecommand{\bysame}{\leavevmode\hbox to3em{\hrulefill}\thinspace}
\providecommand{\MR}{\relax\ifhmode\unskip\space\fi MR }
\providecommand{\MRhref}[2]{%
  \href{http://www.ams.org/mathscinet-getitem?mr=#1}{#2}
}
\providecommand{\href}[2]{#2}

\end{document}